\documentclass{patmorin}
\usepackage{pat}
\usepackage[T1]{fontenc}
\usepackage[utf8]{inputenc}
\usepackage{todonotes,float,subcaption}
\usepackage{algorithm,algpseudocode}
\usepackage{indentfirst}

\usepackage[mathlines]{lineno}
\newcommand*\patchAmsMathEnvironmentForLineno[1]{%
 \expandafter\let\csname old#1\expandafter\endcsname\csname #1\endcsname
 \expandafter\let\csname oldend#1\expandafter\endcsname\csname end#1\endcsname
 \renewenvironment{#1}%
    {\linenomath\csname old#1\endcsname}%
    {\csname oldend#1\endcsname\endlinenomath}}%
\newcommand*\patchBothAmsMathEnvironmentsForLineno[1]{%
 \patchAmsMathEnvironmentForLineno{#1}%
 \patchAmsMathEnvironmentForLineno{#1*}}%
\AtBeginDocument{%
\patchBothAmsMathEnvironmentsForLineno{equation}%
\patchBothAmsMathEnvironmentsForLineno{align}%
\patchBothAmsMathEnvironmentsForLineno{flalign}%
\patchBothAmsMathEnvironmentsForLineno{alignat}%
\patchBothAmsMathEnvironmentsForLineno{gather}%
\patchBothAmsMathEnvironmentsForLineno{multline}%
}

\usepackage[longnamesfirst,numbers,sort&compress]{natbib}

\definecolor{brightmaroon}{rgb}{0.76, 0.13, 0.28}
\definecolor{linkblue}{rgb}{0, 0.337, 0.227}
\newcommand{\defin}[1]{\emph{\textcolor{brightmaroon}{#1}}}
\makeatletter
\def\mathcolor#1#{\@mathcolor{#1}}
\def\@mathcolor#1#2#3{%
  \protect\leavevmode
  \begingroup
    \color#1{#2}#3%
  \endgroup
}
\makeatother

\newcommand{\cay}{\mathsf{Cay}}

\usepackage[inline]{enumitem}
\newcommand{\sm}{\smallsetminus}
\newcommand{\stww}{\mathsf{stww}}
\newcommand{\tww}{\mathsf{tww}}

\title{\MakeUppercase{Accessibility and twin-width}}

\author{
George Kontogeorgiou\thanks{Center for Mathematical Modeling (CNRS IRL2807), University of Chile, Santiago, Chile. \\Supported by ANID Basal Grant CMM FB210005 and ANID-FONDECYT Postdoctorado Grant No. 3250479. \\Email:{ \tt gkontogeorgiou@dim.uchile.cl}
}\hspace{5mm}Bobby Miraftab\thanks{School of Computer Science, Carleton University, Ottawa, Canada. Email: {\tt bobby.miraftab@gmail.com}}}

\begin{document}
\maketitle

\begin{abstract}
We show that finite twin-width does not imply accessibility for finitely generated groups, which answers a question of Esperet. That is, we prove that there exists a finitely generated group $\Gamma$ that has finite uniform twin-width but is not accessible. In particular, for every finite generating set $S$ of $\Gamma$, the Cayley graph
$\cay(\Gamma;S)$ has finite twin-width but is not accessible.
The example is obtained by combining Wilkes's construction of a finitely
generated inaccessible residually $2$-finite group \cite{Wilkes2019} with a result of Bonnet, Geniet, Tessera and Thomass\'e regarding the twin-width of groups acting faithfully on regular trees \cite{bonnet2022twin3}.
\end{abstract}

\section{Introduction}

\emph{Twin-width} is a graph parameter introduced by Bonnet, Kim, Thomassé and Watrigant in Twin-width I \cite{bonnet2021twin}. It measures the complexity of a graph through contraction sequences.
Bounded twin-width has proved to be a robust tameness condition, with connections to structural graph theory, finite model theory, and algorithmic meta-theorems; see also the rest of the Twin-width series \cite{bonnet2021twin2,bonnet2024twin,bonnet2024twin2,bonnet2022twin,bonnet2022twin2,bonnet2022twin3,bonnet2022twin4}.

Twin-width has also found a place in coarse graph theory and geometric group theory, where it can be calculated over infinite graphs, particularly Cayley graphs, as the supremum of the twin-widths of all finite induced subgraphs. In Twin-width II \cite{bonnet2021twin2} it was proved that finiteness of twin-width is a group invariant among finitely generated groups. In fact, in the class of graphs of bounded degree, twin-width is a quasi-isometric invariant, as seen in Twin-width VII \cite{bonnet2022twin3}. 

It was somewhat more challenging to prove that finiteness of twin-width is a meaningful group invariant, that is, that there exist finitely generated groups with infinite twin-width. This was achieved in \cite{bonnet2022twin3} through the following approach.

A class of labeled graphs is called \emph{small} if for each $n\in\mathbb{N}$ it contains at most $2^{O(n)}\cdot n!$ graphs on $n$ vertices. It was shown in \cite{bonnet2021twin2} that every class of bounded twin-width is small. Some years earlier, Osajda \cite{osajda2014small} had used small cancellation theory to prove that, given a class of graphs $\mathcal{C}$ that obeys certain conditions with respect to maximum degree, girth and diameter, it is possible to construct a (locally finite) Cayley graph $G$ such that every graph in $\mathcal{C}$ is an induced subgraph of $G$. The contribution of Bonnet, Geniet, Tessera and Thomass\'e \cite{bonnet2022twin3} to this question was to construct a specific class $\mathcal{C}_1$ of graphs that obeys Osajda's conditions and is not small. In particular, the twin-width of the members of $\mathcal{C}_1$ is unbounded, hence the twin-width of the Cayley graph $G$ that contains all the members of $\mathcal{C}_1$ as induced subgraphs is infinite.

Although this approach resolves the question of the existence of groups with infinite twin-width, it is not entirely satisfactory, due to being somewhat artificial. 
It would be pleasant to know that there is some significant property of finitely generated groups that follows from finite twin-width. 
Accessibility appears to be a good candidate for this. 
Let us briefly recall its graph-theoretic definition: a locally finite graph $G$ is \emph{accessible} if there exists a constant $k\in\mathbb{N}$ such that any two ends of $G$ can be separated by at most $k$ edges. 
A finitely generated group is accessible if one (equivalently, all) of its Cayley graphs is (are) accessible. There are also other equivalent definitions for accessibility from the graph-theoretic perspective, see \cite{HamannMiraftab2020,HamannLehnerMiraftabRuehmann2022}. 

In personal communication, Louis Esperet asked whether finite twin-width implies accessibility for groups. 
Since bounded twin-width is a strong structural condition in many finite settings, one might hope that a locally finite graph of finite twin-width
will necessarily be accessible. 
The purpose of this note is to show that this is not the
case. Our result is the following.

\begin{thm}
\label{thm:main}
There exists a finitely generated group $\Gamma$ of finite twin-width that is not accessible.
\end{thm}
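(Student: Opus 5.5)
The plan is to combine the two ingredients named in the abstract. The first is Wilkes's construction \cite{Wilkes2019}, which gives a finitely generated group $\Gamma$ that is residually $2$-finite (residually a finite $2$-group) and is not accessible. The second is the result of Bonnet, Geniet, Tessera and Thomass\'e \cite{bonnet2022twin3}: every finitely generated group acting faithfully on a regular rooted tree (equivalently, a finitely generated subgroup of the automorphism group of a rooted $d$-regular tree, perhaps with some extra hypothesis on the action) has finite twin-width. It then suffices to show that $\Gamma$ embeds in such an automorphism group. Finiteness of twin-width being a group invariant \cite{bonnet2021twin2} then gives the conclusion for every finite generating set. Inaccessibility is also independent of the generating set, so for every finite $S$ the Cayley graph $\cay(\Gamma;S)$ has finite twin-width but is not accessible.

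\textbf{Step 1: from residually $2$-finite to an action on the binary tree.} Since $\Gamma$ is residually $2$-finite and finitely generated, I would build a descending chain of normal subgroups $\Gamma=N_0\ge N_1\ge N_2\ge\cdots$ with each index $[N_i:N_{i+1}]=2$ and $\bigcap_i N_i=\{1\}$. Each step has to be normal in $\Gamma$, or at least subnormal in a way that is compatible with the coset tree. Concretely, the intersection of all normal subgroups of $2$-power index is trivial, and for a finitely generated group one can take the $2$-central series filtration (or iterate the Frattini-type subgroups $\Phi_2$). This yields normal subgroups of $2$-power index with trivial intersection, and I would refine that chain to steps of index $2$.

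The action of $\Gamma$ by left multiplication on the rooted coset tree, whose level $i$ is $\Gamma/N_i$ and whose parent map is the natural projection, is then an action on the rooted binary tree. It is faithful because $\bigcap_i N_i$ is trivial. The refinement only needs a chain of finite-index subgroups with trivial intersection, each of index $2$ in the previous one, so normality of each step is not even required, only that the intersection, and hence the kernel (the core), is trivial.

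\textbf{Step 2: invoke the twin-width result.} I would apply the theorem of \cite{bonnet2022twin3} that finitely generated subgroups of $\mathrm{Aut}(T_2)$ have finite twin-width. I expect the main obstacle to be matching the exact hypotheses of that theorem. If it requires more than faithfulness (for example level-transitivity, or a specific form such as an automaton group), I would check that the coset action above is level-transitive, which holds since $\Gamma$ acts transitively on each $\Gamma/N_i$. If the hypothesis is instead on residual finiteness with respect to a $p$-group filtration, Wilkes's group meets it directly. The remaining checks are routine: that twin-width and inaccessibility pass to every Cayley graph, by \cite{bonnet2021twin2} and by the standard quasi-isometry invariance of accessibility.
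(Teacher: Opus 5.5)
Your proposal is correct and follows the paper's proof. It uses Wilkes's finitely generated inaccessible residually $2$-finite group, then a chain of index-$2$ subgroups with trivial intersection to get a faithful action on the rooted binary tree, then the Bonnet--Geniet--Tessera--Thomass\'e result. The hedging in Step 2 is unnecessary: the paper combines $\mathsf{utww}(\operatorname{Aut}(T_2))=2$ with monotonicity of uniform twin-width under subgroups, and this needs no extra hypothesis such as level-transitivity.
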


The proof is short and relies on two results from the literature. 

The group-theoretic input is a theorem of Wilkes.
For every prime $p$, there exists a finitely generated inaccessible residually $p$-finite discrete group. 
We use this result for $p=2$. Let $\Gamma$ be such a group.  
Since $\Gamma$ is residually $2$-finite, one can choose a descending chain of normal subgroups
\[
        \Gamma=N_0\triangleright N_1\triangleright N_2\triangleright \cdots
\]
with $[N_i:N_{i+1}]=2$ for every $i$ and
\[
        \bigcap_i N_i=\{1\}.
\]
The associated coset tree has level $i$ consisting of the right cosets
$N_i\gamma$, and each vertex has exactly $2$ children.  Hence this tree is the
rooted binary tree $T_2$.  
Right multiplication gives an action of $\Gamma$ on this tree, and the triviality of $\bigcap_i N_i$ makes the action faithful.
Thus $\Gamma \leq \operatorname{Aut}(T_2)$.

The twin-width input comes from \cite{bonnet2022twin3}. The authors develop a new version of twin-width for groups, called \defin{uniform twin-width}, that is defined through permutation actions. 
Uniform twin-width is preserved by many natural group operations. 
Moreover, finiteness of uniform twin-width implies finiteness of twin-width. The theorems that we need from \cite{bonnet2022twin3} are the following: that the automorphism group of the rooted binary tree has uniform twin-width $2$, and that uniform twin-width is monotone under taking subgroups. Applying these two results to the faithful action $\Gamma\curvearrowright T_2$ shows that the inaccessible group $\Gamma$ has finite twin-width as a group.  

The example shows that finite twin-width and accessibility capture different forms of tameness. Finite twin-width controls the behavior of finite induced subgraphs, or equivalently finite pieces of the Cayley graph, through contraction sequences. Accessibility, by contrast, controls the global structure of ends. The theorem demonstrates that bounded finite-piece complexity does not force a
uniform bound on separating ends, even in the highly homogeneous setting of Cayley graphs.

\section{Definitions and Preliminaries}

\subsection{Graphs \& Groups}

A graph is \defin{locally finite} if every vertex has finite degree. Throughout the paper, all infinite graphs are countably infinite and locally finite. A \defin{ray} is an infinite one-way path. Two rays are \defin{equivalent} if they are joined by an infinite number of pairwise disjoint paths, and an \defin{end} is an equivalence class of rays.

For a group $\Gamma$ and a \textbf{finite} generating set $S$ of $\Gamma$, we define the \defin{Cayley graph} \[\cay(\Gamma;S):=(\Gamma, \{\{g,gs\}|g\in\Gamma, s\in S\}).\]  

Given two metric spaces $X$ and $Y$ (for example, two graphs with the usual graph metric), a \defin{quasi-isometry} is a map $f\colon X\rightarrow Y$ with the following properties:
\begin{itemize}
    \item there exists $C>0$ such that $C^{-1}\cdot d(f(x_1),f(x_2))-C\leq d(x_1,x_2)\leq C\cdot d(f(x_1),f(x_2))+C$ for every $x_1, x_2\in X$; 
    \item there exists $\delta >0$ such that for every $y\in Y$ there is $x\in X$ with $d(f(x),y)<\delta$.
\end{itemize}
Different Cayley graphs of a finitely generated group are quasi-isometric.  

A graph property $P$ is called a \defin{group invariant} if, for every group $\Gamma$, either all Cayley graphs of $\Gamma$ are in $P$ or none of them. A graph property is a \defin{quasi-isometric invariant} if it is preserved by quasi-isometries. Naturally, every quasi-isometric invariant is a group invariant. We note that the number of ends of a graph is a quasi-isometric invariant, so it is well-defined to talk about the number of ends of a finitely generated group. A classic theorem of Freudenthal \cite{freudenthal1944enden} and Hopf \cite{hopf1943enden} says that every group has zero, one, two, or infinite ends.

Finally, a group $\Gamma$ is \defin{residually $p$-finite} if for each $g\in \Gamma\sm  \{1\}$ there exists $N\triangleleft\Gamma$ such that $\Gamma/N$ is a finite $p$-group and $g\notin N$. A common equivalent definition is to have \[\bigcap_{N\triangleleft \Gamma, \Gamma/N \text{ finite } p-\text{group}}N=\{1\}.\] It is a simple fact, proved here for completeness, that: 

\begin{lem}\label{chain}

Given a countably infinite residually $p$-finite group $\Gamma$, there exists a descending chain of normal subgroups
\[
        \Gamma=N_0\triangleright N_1\triangleright N_2\triangleright \cdots
\]
with $[N_i:N_{i+1}]=p$ for every $i$ and
\[
        \bigcap_i N_i=\{1\}.
\]

\end{lem}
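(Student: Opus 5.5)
The plan is to build the chain in two stages. First, construct a coarse descending chain of normal subgroups of $p$-power index whose intersection is trivial. Second, refine each step of that chain into steps of index exactly $p$, keeping every subgroup normal in $\Gamma$; this is stronger than the statement requires, and it is what the coset-tree action uses.

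\emph{Coarse chain.} Since $\Gamma$ is countable, I enumerate its nontrivial elements as $g_1,g_2,\dots$. By residual $p$-finiteness, for each $k$ there is $K_k\triangleleft\Gamma$ with $\Gamma/K_k$ a finite $p$-group and $g_k\notin K_k$. I set $M_0=\Gamma$ and $M_k=K_1\cap\dots\cap K_k$. Each $M_k$ is normal in $\Gamma$, and $\Gamma/M_k$ embeds in $\prod_{j\le k}\Gamma/K_j$, so it is a finite $p$-group. The $M_k$ decrease, and $g_k\notin M_k$ for each $k$. Hence $\bigcap_k M_k=\{1\}$.

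\emph{Refinement.} Fix $k$ and put $P=\Gamma/M_k$, a finite $p$-group, and $H=M_{k-1}/M_k$, a normal subgroup of $P$. I will use the standard fact that there is a chain $H=H_0>H_1>\dots>H_r=1$ with every $H_j\triangleleft P$ and $[H_j:H_{j+1}]=p$. To prove it, induct on $|H|$. If $H\neq1$, then $H\cap Z(P)\neq1$: the conjugation action of $P$ on $H$ has orbits of $p$-power size and fixes $1$, so it has a nontrivial fixed point. Pick $z\in H\cap Z(P)$ of order $p$. Then $Z=\langle z\rangle$ is normal in $P$, and I apply induction to $H/Z\triangleleft P/Z$. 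Lifting the resulting chain to $P$ and appending $Z>1$ at the bottom gives the chain for $H$. Pulling back to $\Gamma$ gives subgroups $M_{k-1}=L_0>L_1>\dots>L_r=M_k$, each normal in $\Gamma$, with consecutive indices $p$.

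\emph{Assembly.} Concatenating these refinements over all $k$ gives a chain $\Gamma=N_0>N_1>\cdots$ of normal subgroups of $\Gamma$ with all consecutive indices equal to $p$; steps where $M_{k-1}=M_k$ contribute nothing. The chain is infinite: each $M_k$ has finite index, and if the $M_k$ stabilized then a finite-index subgroup would be $\{1\}$, contradicting that $\Gamma$ is infinite. Its intersection is $\bigcap_k M_k=\{1\}$, because every $M_k$ occurs in the chain. The only real work is the refinement step, namely the existence of $P$-normal index-$p$ chains inside a normal subgroup of a finite $p$-group, and the central-element argument above handles it.
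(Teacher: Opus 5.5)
Your proof is correct, and its first stage is exactly the paper's: intersect finitely many normal subgroups of $p$-power index to get a decreasing chain $M_k$ with trivial intersection. The two arguments differ in the refinement step.

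The paper takes a composition series of the finite $p$-group $L_n/L_{n+1}$ and pulls it back. This is quicker, but it only yields a subnormal chain. Each $N_{i+1}$ is normal in $N_i$, but an intermediate term need not be normal in $\Gamma$. For example, a composition series of a dihedral quotient of order $8$ may pass through the non-normal subgroup generated by a reflection.

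Your induction via a central element of order $p$ in $H\cap Z(P)$ instead refines by subgroups normal in all of $\Gamma$, in effect a chief series. That is what the words ``normal subgroups'' in the statement literally demand. You also check explicitly that the concatenated chain is infinite because $\Gamma$ is infinite, which the paper leaves implicit.

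One correction to your framing: normality in $\Gamma$ is not what the coset-tree argument of Lemma~\ref{lem:residually-p-tree} uses. Right multiplication on right cosets is well defined and preserves levels and parenthood for any descending chain of subgroups. Faithfulness only needs $\bigcap_i N_i=\{1\}$. So the paper's weaker conclusion already suffices for the application, while your stronger one makes the lemma true exactly as worded.
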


\begin{proof}
    Enumerate the nontrivial elements of $\Gamma$ as $\Gamma\sm \{1\}=\{g_1,g_2,\ldots\}$.
Since $\Gamma$ is residually $p$-finite, for each $i$ there is a normal subgroup
$M_i\triangleleft \Gamma$ of finite $p$-power index such that
\[
        g_i\notin M_i.
\]
We set $L_n=M_1\cap \cdots \cap M_n$.
Then each $L_n$ is normal and has finite $p$-power index in $\Gamma$, and
\[
        \bigcap_{n\geq 1} L_n=\{1\}.
\]

We refine the descending chain $\Gamma=L_0\triangleright L_1\triangleright L_2\triangleright \cdots$ so that every step has index $p$. Indeed, for each $n$, the quotient
$L_n/L_{n+1}$ is a finite $p$-group, hence has a composition series whose
successive factors have order $p$. Pulling this series back to $L_n$ gives
a finite chain from $L_n$ to $L_{n+1}$ with successive indices $p$.
Concatenating these finite refinements gives a descending chain
\[
        \Gamma=N_0\triangleright N_1\triangleright N_2\triangleright \cdots
\]
such that
\[
        [N_i:N_{i+1}]=p
        \qquad\text{for every }i,
        \qquad\text{and}\qquad
        \bigcap_{i\geq 0}N_i=\{1\}.\qedhere
\]
\end{proof}

\subsection{Accessibility}

Stallings' Theorem \cite{Stallings1971} is a fundamental result of infinite group theory. 
It states that every group with at least two ends \defin{splits} over a finite subgroup. 
That is, if $\Gamma$ is such a group, then either it can be written as a free product with amalgamation over a finite subgroup, or as an HNN extension with respect to an isomorphism between two finite subgroups. 
For more details, see \cite{dicks1989groups} for a classic treatment, or \cite{serre2002trees} from the perspective of Bass-Serre theory. 

In simple terms, Stallings' Theorem provides a canonical way to decompose a group with at least two ends, called a \defin{JSJ-splitting}. For certain groups, iterative JSJ-splittings of their factors halt after a finite number of steps. 
Such groups are termed \defin{accessible}. 
They are the groups that can conveniently be described in terms of JSJ-splittings yielding a finite number of finite and 1-ended factors. Dunwoody \cite{dunwoody1985accessibility} proved that every finitely presented group is accessible. He also gave the first example of a finitely generated group that is not accessible, see \cite{Dunwoody1993}.
In the context of finitely generated groups, Thomassen and Woess \cite{ThomassenWoess1993} proved a very intuitive equivalent definition from the perspective of graph theory: a group is accessible if in one (equivalently, in each) of its Cayley graphs each pair of ends is separated by a vertex-cut (equivalently, edge-cut) of bounded size. The definition of Thomassen and Woess was transformative for infinite graph theory, and allowed graph theorists to study accessibility and Stallings decompositions in more general contexts, such as in quasi-transitive graphs \cite{hamann2022stallings}. 

In 2019, Wilkes \cite{Wilkes2019} constructed finitely generated residually $p$-finite groups that are not accessible. It is precisely among these groups that we will obtain a witness for ~\Cref{thm:main}. 

\begin{lem}
{\rm \cite[Theorem 5.1]{Wilkes2019}}
\label{lem:wilkes}
For every prime $p$, there exists a finitely generated, inaccessible, residually $p$-finite discrete group.
\end{lem}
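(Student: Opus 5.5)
The statement in \Cref{lem:wilkes} is cited from \cite[Theorem 5.1]{Wilkes2019}, so within this paper no proof is needed beyond the reference. Still, here is how I would reconstruct the argument. The guiding model is Dunwoody's inaccessible group \cite{Dunwoody1993}, which is the fundamental group of an infinite chain of finite groups. My plan is to rebuild that construction so that every vertex and edge group is a finite $p$-group. I would then check residual $p$-finiteness directly from the tree-of-groups structure.

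\textbf{Step 1: the graph of groups.} Take a ray-shaped graph of finite $p$-groups $G_0 \le G_1 \le G_2 \le \cdots$. The edge groups $E_i$ should be proper subgroups arranged so that the fundamental group $\Gamma_\infty$ of the infinite amalgam $G_0 *_{E_0} G_1 *_{E_1} \cdots$ has infinitely many nontrivial splittings over finite subgroups. This already forces $\Gamma_\infty$ to be inaccessible. Following Dunwoody, I would then exhibit a finitely generated group $\Gamma$ containing all the $G_i$. The natural choice is an HNN-type extension, or a quotient realizing an automorphism that shifts $G_i$ to $G_{i+1}$, so that $\Gamma$ is generated by $G_0$ together with a few extra elements. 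The group $\Gamma$ acts on the corresponding Bass--Serre tree with finite edge stabilizers, and by Thomassen--Woess \cite{ThomassenWoess1993} it inherits inaccessibility, since ends are separated by arbitrarily large minimal cuts.

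\textbf{Step 2: residual $p$-finiteness.} For each $n$, I would build a homomorphism from $\Gamma$ onto a finite $p$-group that is injective on the finite subtree of groups up to level $n$. Concretely, I would truncate the chain at $G_n$ and collapse it. The result is an amalgam of finite $p$-groups along a finite tree, with the extra generators acting compatibly. Amalgams and HNN extensions of finite $p$-groups are residually $p$ whenever there are compatible $p$-filtrations, by Higman's criterion. So it suffices to choose the $G_i$ together with filtrations $G_i \triangleright K_{i,1} \triangleright \cdots$ that intersect the edge groups consistently.

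\textbf{Main obstacle.} The hard step is reconciling the two requirements. Inaccessibility needs the edge groups $E_i$ to grow without bound and to sit inside the $G_i$ in a twisted way. Higman's compatibility condition needs the subgroup intersections to behave coherently across the entire infinite chain, and it must also be compatible with the shifting generator. My first attempt would be to take the $G_i$ to be iterated wreath products $C_p \wr C_p \wr \cdots$, that is, Sylow $p$-subgroups of $\mathrm{Sym}(p^i)$. Their natural level filtrations are compatible under the standard inclusions, and the shift automorphism arises from the self-similar structure of the $p$-adic tree. If that fails, I would defer entirely to Wilkes's explicit construction in \cite{Wilkes2019}. For the purposes of \Cref{thm:main}, the lemma is in any case used only as a black box together with \Cref{chain} for $p=2$.
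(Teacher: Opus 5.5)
Citing \cite[Theorem 5.1]{Wilkes2019} is exactly what the paper does. It gives no proof of this lemma, so your first sentence already suffices.

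The reconstruction you sketch, however, has a genuine gap in Step 1, at the claim that $\Gamma$ ``inherits'' inaccessibility.
\begin{itemize}
\item The infinite amalgam $\Gamma_\infty=G_0*_{E_0}G_1*_{E_1}\cdots$ is a strictly increasing union of its finite-stage amalgams. It is therefore not finitely generated, and accessibility in the sense of \cite{ThomassenWoess1993} says nothing about it.
\item Containing the $G_i$, or even all of $\Gamma_\infty$, transfers nothing to $\Gamma$. Every countable group embeds in a finitely generated one-ended group.
\item A single action of a finitely generated group on a tree with finite edge stabilizers yields only one finite graph-of-groups decomposition, because the minimal invariant subtree has finite quotient. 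For example, $C_p*\mathbb{Z}$ acts on a tree with trivial edge stabilizers and is virtually free.
\end{itemize}
Your shift lands exactly in this situation. For $\Gamma$ to be generated by $G_0$ and a few extra elements, the shift must carry $G_i$ onto $G_{i+1}$. That makes all $G_i$ isomorphic, which contradicts your strictly growing chain. The ascending HNN extension is then an HNN extension of the finite group $G_0$ over a finite subgroup, hence virtually free. More generally, Linnell's theorem says that a bound on the orders of finite subgroups forces accessibility.

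The missing idea is Dunwoody's mechanism \cite{Dunwoody1993}.
\begin{itemize}
\item One amalgamates a ray of finite groups with a finitely generated group $Q$ along a strictly increasing union $G_\omega=\bigcup_i G_i\le Q$ of finite groups.
\item The finite vertex groups are chosen so that each is generated by the preceding edge group together with the next $G_i$.
\item Then the whole group is generated by $Q$ and the first vertex group. Yet for every $n$ it splits as a path of $n$ amalgamations over finite groups of unbounded order, ending in a finitely generated vertex group that splits again.
\end{itemize}
What \cite{Wilkes2019} must add is a version in which the finite groups are $p$-groups and the resulting group is residually $p$. That amalgam has an infinite factor $Q$ and an infinite, non-finitely-generated amalgamated subgroup $G_\omega$. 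Higman's criterion for amalgams of finite $p$-groups does not cover it. Your truncate-and-collapse step also never says where the extra generators go, so it does not yet define a homomorphism on all of $\Gamma$.
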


\subsection{Twin-width}

A \defin{vertex identification} on a graph $G$ is an operation in which two distinct vertices $u,v\in V(G)$ are deleted from $G$, and a new vertex $w$ is added so that $N(w)=N(u)\cup N(v)\sm \{u,v\}$. 
Suppose that $G$ has $n\in\mathbb{N}$ vertices and let $G=:G_n\rightarrow\cdots\rightarrow G_1$ be a sequence of vertex identifications. 
The \defin{width} of this sequence is defined to be $\max\{\Delta(G_n),\cdots,\Delta(G_1)\}$. 
The minimum width among all sequences of vertex identifications of length $n$ applied to $G$ is called the \defin{strict twin-width} of $G$ and is denoted \defin{$\stww(G)$}. 
If $G$ has infinitely many vertices, then its strict twin-width is defined to be the supremum of the strict twin-widths of all its induced subgraphs. 

Strict twin-width is a variant of twin-width introduced in \cite{bonnet2022twin3}. It relates to ordinary twin-width, denoted \defin{$\tww(G)$}, as follows. 

\begin{lem}
{\rm \cite[Equation (1), Section 3.1.1]{bonnet2022twin3}}
\label{lem:strict-ordinary}
For every finite graph $G$,
\[
        \max\{\Delta(G),\operatorname{tww}(G)\}
        \leq
        \operatorname{\stww}(G)
        \leq
        \Delta(G)+\operatorname{tww}(G).
\]
\end{lem}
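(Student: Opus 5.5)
The statement is \Cref{lem:strict-ordinary}, comparing strict and ordinary twin-width. I would prove the two inequalities separately, working with a fixed finite graph $G$ on $n$ vertices. Ordinary twin-width is defined via trigraphs: contracting $u,v$ makes an edge from the new vertex to $x$ black if $x$ was black-adjacent to both, absent if non-adjacent to both, and red otherwise. The width of a sequence is the maximum red degree, and $\tww(G)$ is the minimum width over all contraction sequences.

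\textbf{Lower bound.} We have $\Delta(G)\le\stww(G)$ immediately, since $G_n=G$ appears in every sequence of vertex identifications. For $\tww(G)\le\stww(G)$, I would take an optimal sequence $G=G_n\to\cdots\to G_1$ of vertex identifications and perform the same merges as trigraph contractions, producing trigraphs $H_n,\dots,H_1$. Each vertex of $H_i$ and of $G_i$ corresponds to the same part of a partition of $V(G)$. In $G_i$, two parts are adjacent iff some edge of $G$ joins them, because $N(w)=N(u)\cup N(v)$ propagates any adjacency. In $H_i$, two parts are joined by a red or black edge iff some edge of $G$ joins them as well. Thus the underlying graph of $H_i$ equals $G_i$, so the red degree in $H_i$ is at most $\Delta(G_i)$. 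Taking maxima gives $\tww(G)\le\stww(G)$.

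\textbf{Upper bound.} I would take a contraction sequence $H_n,\dots,H_1$ of red width $d=\tww(G)$ and run the same merges as vertex identifications. By the observation above, $\deg_{G_i}(P)$ is the total degree of the part $P$ in $H_i$, that is, its red degree plus its black degree. The red degree is at most $d$. The key step is to bound the black degree. If $P$ is black-adjacent to $Q$, then every vertex of $P$ is adjacent in $G$ to every vertex of $Q$. Picking any $x\in P$, distinct black neighbours $Q$ of $P$ are disjoint sets each contributing at least one neighbour of $x$ in $G$. Hence the black degree of $P$ is at most $\deg_G(x)\le\Delta(G)$. Therefore $\Delta(G_i)\le\Delta(G)+d$ for every $i$, which gives $\stww(G)\le\Delta(G)+\tww(G)$.

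The main obstacle is the bookkeeping behind the invariant used in both directions: that adjacency in $G_i$ and in the underlying graph of $H_i$ both coincide with the relation ``some $G$-edge joins the two parts'' at every stage. I would prove this by induction on the number of merges. The black-degree bound is the only genuinely non-obvious point, and it rests on black edges being homogeneous, i.e.\ complete bipartite between the two parts.
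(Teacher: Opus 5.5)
Your proof is correct. Running the same merge sequence in both models, you show that adjacency in $G_i$ and in the underlying graph of $H_i$ both mean ``some edge of $G$ joins the two parts''; this gives $\tww(G)\le\stww(G)$. The observation that the black neighbours of a part $P$ are disjoint subsets of $N_G(x)$ for any $x\in P$ then gives the upper bound.

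The paper itself gives no proof of this lemma; it imports it from \cite{bonnet2022twin3}. Your argument is the natural direct proof of that inequality. The one point to make explicit is that ``black edges are complete and non-edges are anticomplete between parts'' is also an inductive invariant of trigraph contraction, to be proved alongside the adjacency invariant.
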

In particular, finiteness of twin-width and of strict twin-width are equivalent properties for graphs of bounded degree. 
Because of this, it is convenient to work only with strict twin-width in this paper, and ever refer to it merely as ``twin-width".

Although finiteness of twin-width is a group invariant, the specific value of the parameter is not, as it depends on the particular Cayley graph. 
In order to define twin-width for groups, Bonnet, Geniet, Tessera and Thomass\'e \cite{bonnet2022twin3} introduced \defin{uniform twin-width} and showed that it is stable under a variety of standard group operations. 
For a group $\Gamma$, we denote its uniform twin-width by \defin{$\mathsf{utww}(\Gamma)$}. 
It is always equal to a no-negative integer or infinity. 
We remark that if a group has finite uniform twin-width, then all of its Cayley graphs have finite twin-width. 

Regarding uniform twin-width, we need the following two results.

\begin{lem}{\rm \cite[Lemma 6.1]{bonnet2022twin3}}\label{lem:subgroup}
    Uniform twin-width is monotone under taking subgroups. That is, if $H\leq\Gamma$, then $utww(H)\leq utww(\Gamma)$.  
\end{lem}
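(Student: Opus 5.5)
The plan is to prove the monotonicity directly from the definition of uniform twin-width in \cite{bonnet2022twin3}. I will work with that definition in the following form, which is how I recall it: $\mathsf{utww}(\Gamma)$ is the supremum, over all \emph{finite} subsets $S\subseteq\Gamma$ (not necessarily generating), of the (strict) twin-width of the edge-coloured Cayley graph $\cay(\Gamma;S)$. Equivalently, it is the supremum over all finite subsets $X\subseteq\Gamma$ of the twin-width of the subgraph of $\cay(\Gamma;S)$ induced by $X$, since the twin-width of an infinite graph is the supremum over its finite induced subgraphs. The point of this definition is that it quantifies over \emph{all} finite sets $S$ of group elements. So the statement should reduce to a comparison of Cayley graphs built from a common finite set $S\subseteq H\subseteq\Gamma$.

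The key steps, in order, are as follows. First, fix $H\leq\Gamma$ and an arbitrary finite $S\subseteq H$. Then $S$ is also a finite subset of $\Gamma$, so it is admissible in the supremum defining $\mathsf{utww}(\Gamma)$. Second, observe that $\cay(H;S)$ is an induced subgraph of $\cay(\Gamma;S)$, colours included. The vertex set $H$ is a union of connected components of $\cay(\Gamma;S)$: an edge $\{g,gs\}$ with $g\in H$ and $s\in S\subseteq H$ stays inside $H$. Moreover, the edges of $\cay(\Gamma;S)$ between vertices of $H$ are exactly the edges of $\cay(H;S)$. Third, note that twin-width is monotone under taking induced subgraphs. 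For infinite graphs this is immediate, because $\stww$ is the supremum over finite induced subgraphs, and every finite induced subgraph of $\cay(H;S)$ is one of $\cay(\Gamma;S)$. Hence $\stww(\cay(H;S))\leq\stww(\cay(\Gamma;S))\leq\mathsf{utww}(\Gamma)$. Taking the supremum over $S$ gives $\mathsf{utww}(H)\leq\mathsf{utww}(\Gamma)$.

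The main obstacle is matching the precise definition used in \cite{bonnet2022twin3}. If uniform twin-width is phrased through permutation actions, for instance via finite sets of permutations acting on finite subsets, with twin-width of the associated coloured structure, then I would reformulate the argument accordingly. The finite subsets of $H$ and the permutations they induce by right multiplication are, verbatim, among those available for $\Gamma$. The structure obtained for $H$ on a finite $X\subseteq H$ is the restriction of the structure for $\Gamma$ on the same $X$, so the same inequality follows. In either formulation, the only facts needed are that finite subsets of $H$ are finite subsets of $\Gamma$, and that the relevant twin-width is monotone under induced substructures. I expect the latter to hold for the coloured/strict version as well, since deleting vertices from a contraction sequence never increases red degrees.
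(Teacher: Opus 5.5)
Your argument rests on a definition of $\mathsf{utww}$ that cannot be the one in \cite{bonnet2022twin3}. Suppose $\mathsf{utww}(\Gamma)$ were the supremum of $\stww(\cay(\Gamma;S))$ over all finite $S\subseteq\Gamma$. We have $\stww(G)\geq\Delta(G)$ (Lemma~\ref{lem:strict-ordinary}), and the vertex $g$ of $\cay(\Gamma;S)$ has the $|S\sm\{1\}|$ distinct neighbours $gs$. So every infinite group would have $\mathsf{utww}=\infty$. That contradicts Lemma~\ref{lem:aut-binary-tree}, which is exactly the fact the paper combines with this statement. Your chain of inequalities is valid, but it compares two quantities that are identically infinite for all infinite groups. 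It therefore says nothing about the invariant in the lemma.

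In \cite{bonnet2022twin3}, uniform twin-width is defined through the self-action of the group by multiplication \emph{relative to a chosen total order}. Roughly, $\mathsf{utww}(\Gamma)\le k$ means there is a total order $<$ on $\Gamma$ such that, for every $g\in\Gamma$, the permutation $x\mapsto xg$ read in the order $<$ (the ordered binary structure it defines on $\Gamma$) has twin-width at most $k$, uniformly in $g$. Your fallback paragraph never mentions this existential choice of order, and handling it is the entire content of the lemma.

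The repair goes as follows. Fix an order $<$ witnessing $\mathsf{utww}(\Gamma)\le k$ and restrict it to $H$. For $h\in H$, the set $H$ is invariant under $x\mapsto xh$. This is your ``union of components'' observation, and it is the part of your proposal that carries over. Hence the ordered structure of $h$ on $(H,<|_H)$ is the substructure of the one on $(\Gamma,<)$ induced by $H$, i.e.\ the $H\times H$ submatrix of the permutation matrix. Twin-width of ordered structures is monotone under induced substructures, so $<|_H$ witnesses $\mathsf{utww}(H)\le k$.

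With this correction your argument becomes the standard short proof. As written, it proves monotonicity of the wrong quantity.
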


\begin{lem}
{\rm \cite[Proposition 6.18]{bonnet2022twin3}}
\label{lem:aut-binary-tree}
Let $T_2$ be the infinite rooted binary tree. 
The group $\operatorname{Aut}(T_2)$ has uniform twin-width $2$.
\end{lem}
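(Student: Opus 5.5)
The statement to prove is Lemma~\ref{lem:aut-binary-tree}. Since it is quoted from \cite{bonnet2022twin3}, I would reprove it directly from the definition of uniform twin-width via permutation actions. The guiding idea is that $\operatorname{Aut}(T_2)$ acts on the leaves of the finite truncations of $T_2$, and this action respects the hierarchical block structure of the tree.

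\textbf{Upper bound.} Fix a finite set of elements $g_1,\dots,g_k\in\operatorname{Aut}(T_2)$ and a finite set $X$ of vertices of $T_2$. I would first enlarge $X$ to be closed under taking ancestors; I expect uniform twin-width to be monotone under such a restriction, and I would check this. Then I would build the contraction sequence level by level from the bottom. At the deepest level, I merge pairs of siblings into their parent: two siblings $u,v$ with common parent $w$. Each $g_i$ maps the sibling pair $\{u,v\}$ onto another sibling pair, since automorphisms of the rooted tree preserve the parent relation. Consequently the merged part $\{u,v\}$ has red edges, for each generator, only towards the single part $\{g_i u, g_i v\}$. It also has edges back along $g_i^{-1}$, and since each $g_i$ acts bijectively on the parts, the red degree stays bounded.

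\textbf{The delicate point.} The trouble is that a merge happening at one level while other pairs are still unmerged could create red degree $2$ via images and preimages. Ordering the contractions so that whole levels are merged in the order respected by the action should keep the width at $2$. This is exactly the bound that has to be verified against the precise definition of uniform twin-width in \cite{bonnet2022twin3}, and I expect it to be the main obstacle. The bound must be uniform in $k$, which should follow from the parts being blocks of imprimitivity for the whole group.

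\textbf{Lower bound.} For the matching lower bound of $2$, I would exhibit a small subgroup, for instance one containing an infinite dihedral group acting on the tree, whose induced structure forces red degree $2$ at some stage.
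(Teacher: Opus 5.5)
The paper does not prove this statement. It imports it from \cite[Proposition 6.18]{bonnet2022twin3}, so your sketch has to stand on its own, and as written it has two genuine gaps.

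\textbf{The sequences live on the wrong set.} Whatever the exact formulation in \cite{bonnet2022twin3}, uniform twin-width is an invariant of the group. Finiteness of it must yield contraction sequences for finite induced subgraphs of $\cay(\Gamma;S)$, that is, for finite sets of \emph{group elements}. Your sequences partition finite ancestor-closed sets of \emph{vertices} of $T_2$, and nothing in the proposal turns them into partitions of a finite set $Y\subseteq\operatorname{Aut}(T_2)$. Because you never write down the definition you are checking against, this mismatch is never confronted.

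\textbf{The quantitative core is left open.} You yourself flag the bound $2$, and its independence of the number $k$ of elements, as the main obstacle. Your suggested remedy, ordering the merges ``in the order respected by the action'', cannot give uniformity: one sequence must serve every element, and different elements permute sibling pairs differently. The count you sketch, red edges along each $g_i$ and back along $g_i^{-1}$, gives a bound that grows with $k$. It can also reach $4$ per element at intermediate stages, when both the image and the preimage of a merged pair are still split. The lower bound is only gestured at, though Theorem~\ref{thm:cayley-finite-tww-not-accessible} uses only $\mathsf{utww}\leq 2$.

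\textbf{The missing idea} is to run your sibling-pair argument on cosets in the group, where no ordering is needed.
\begin{enumerate}
\item Let $\operatorname{St}(n)$ be the pointwise stabilizer of all vertices of depth at most $n$. Recording at which depth-$n$ vertices an element of $\operatorname{St}(n)$ swaps the two children gives a surjective homomorphism $\operatorname{St}(n)\to(\mathbb{Z}/2)^{2^n}$ with kernel $\operatorname{St}(n+1)$.
\item Pulling back coordinate subgroups one coordinate at a time refines $\operatorname{St}(n)>\operatorname{St}(n+1)$ into index-$2$ steps. This produces a chain $\operatorname{Aut}(T_2)=H_0>H_1>\cdots$ with $[H_i:H_{i+1}]=2$ and $\bigcap_i H_i=\{1\}$.
\item Given finite $Y$, pick $N$ so that $Y$ meets each right coset of $H_N$ at most once. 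For $i=N-1,\dots,0$, merge, one pair at a time, the traces on $Y$ of the two right $H_{i+1}$-cosets forming each right $H_i$-coset.
\item At every moment each part $P$ lies in a right coset of $H_i$ or of $H_{i+1}$. So for every $g\in\operatorname{Aut}(T_2)$ the set $Pg$ lies in a single right coset of $H_i$, whose trace on $Y$ is the union of at most two current parts.
\end{enumerate}
This bound holds for all $g$ simultaneously and for any merge order, which is exactly the uniform control your sketch lacks.
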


\section{Finite twin-width does not imply accessibility for Cayley graphs}


In this section, we prove~\Cref{thm:main}.

\begin{lem}
\label{lem:residually-p-tree}
Let $\Gamma$ be a countably infinite residually $p$-finite group. Then $\Gamma$ admits
a faithful action on the rooted $p$-regular tree. 
In particular, if $p=2$, then $\Gamma \leq \operatorname{Aut}(T_2)$.

\end{lem}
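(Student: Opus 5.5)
We first apply Lemma~\ref{chain} to obtain a descending chain $\Gamma=N_0\triangleright N_1\triangleright\cdots$ of normal subgroups with $[N_i:N_{i+1}]=p$ and $\bigcap_i N_i=\{1\}$. We then build the coset tree $T$. Its vertex set is the disjoint union over $i\geq 0$ of the right coset spaces $N_i\backslash\Gamma$. We join $N_{i+1}\gamma$ to $N_i\gamma$, which is well defined because $N_{i+1}\leq N_i$: if $N_{i+1}\gamma=N_{i+1}\gamma'$ then $\gamma'\gamma^{-1}\in N_{i+1}\subseteq N_i$. The root is $N_0=\Gamma$. Each coset $N_i\gamma$ is the disjoint union of exactly $[N_i:N_{i+1}]=p$ cosets of $N_{i+1}$, so every vertex has exactly $p$ children. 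Each vertex at level $i+1$ has a unique parent, and by induction on levels $T$ is connected and acyclic. Hence $T$ is the rooted $p$-regular tree, which for $p=2$ is $T_2$.

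Next we define the action by $g\cdot N_i\gamma := N_i\gamma g^{-1}$, using $g^{-1}$ so that this is a left action (one could equally use a right action). It is well defined and preserves levels. It preserves the parent relation, since $N_{i+1}\gamma\subseteq N_i\gamma$ implies $N_{i+1}\gamma g^{-1}\subseteq N_i\gamma g^{-1}$. It fixes the root. Therefore each $g$ acts as a rooted tree automorphism, and $g\mapsto(\text{action of }g)$ is a homomorphism $\Gamma\to\operatorname{Aut}(T)$.

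The only substantive step is faithfulness. Suppose $g$ acts trivially. Then in particular $g$ fixes the vertex $N_i$ (the coset of $\gamma=1$) at every level $i$, so $N_ig^{-1}=N_i$, that is, $g\in N_i$ for all $i$. Hence $g\in\bigcap_i N_i=\{1\}$. Normality is not even needed here, since fixing the identity cosets already suffices. So the kernel is trivial and $\Gamma$ embeds in $\operatorname{Aut}(T)$, which for $p=2$ gives $\Gamma\leq\operatorname{Aut}(T_2)$.

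I do not expect a real obstacle. The only care needed is the left/right bookkeeping in the action and checking that the coset tree really has branching exactly $p$. Both are routine given Lemma~\ref{chain}.
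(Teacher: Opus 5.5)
Your proposal is correct and follows essentially the same route as the paper: you build the coset tree from the chain of Lemma~\ref{chain}, let $\Gamma$ act by right multiplication, and get faithfulness because an element fixing every identity coset $N_i$ lies in $\bigcap_i N_i=\{1\}$. Using $g^{-1}$ to make the action a genuine homomorphism into $\operatorname{Aut}(T_2)$, and checking explicitly that the parent map is well defined, are small bookkeeping refinements of the paper's argument rather than a different approach.
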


\begin{proof}

Consider the descending chain of normal subgroups guaranteed by \Cref{chain}. Now define a rooted tree $T_p$ as follows. 
The vertices at level $i$ are the right cosets $N_i\gamma$, for $\gamma\in\Gamma$.
The parent of $N_{i+1}\gamma$ is $N_i\gamma$. 
Since $[N_i:N_{i+1}]=p$, each vertex has exactly $p$ children. Thus $T_p$ is the rooted $p$-ary tree.

The group $\Gamma$ acts on $T_p$ by right multiplication: $(N_i\gamma)\cdot h=N_i\gamma h$.
This action preserves levels and parent-child relations, so it is an action by
root-preserving automorphisms of $T_p$.
The action is faithful. 
Indeed, if $h\in\Gamma$ acts trivially on $T_p$, then it fixes the
vertex $N_i$ at every level. 
Hence we have $N_i h=N_i$ for every $i$, so $h\in N_i$ for every $i$. 
Therefore we obtain $$h\in \bigcap_i N_i=\{1\},$$ which yields a contradiction.
\end{proof}

\begin{thm}
\label{thm:cayley-finite-tww-not-accessible}
There exists a finitely generated group $\Gamma$ that has finite uniform twin-width but is not accessible. 
In particular, for every finite generating set $S$, the Cayley graph
$\cay(\Gamma;S)$ has finite twin-width but is not accessible.
\end{thm}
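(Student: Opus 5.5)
The plan is to assemble the earlier lemmas in sequence. First, \Cref{lem:wilkes} with $p=2$ gives a finitely generated, inaccessible, residually $2$-finite group $\Gamma$. Before applying \Cref{lem:residually-p-tree}, I need $\Gamma$ to be countably infinite. Countability is automatic for a finitely generated group. For infiniteness, note that every finite group is accessible: its Cayley graphs are finite, so they have no ends and the Thomassen--Woess condition holds vacuously. Since $\Gamma$ is inaccessible, it must therefore be infinite.

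Next, \Cref{lem:residually-p-tree} gives a faithful action of $\Gamma$ on the rooted binary tree $T_2$. Faithfulness means the homomorphism $\Gamma\to\operatorname{Aut}(T_2)$ is injective, so $\Gamma$ is isomorphic to a subgroup of $\operatorname{Aut}(T_2)$. By \Cref{lem:aut-binary-tree}, $\mathsf{utww}(\operatorname{Aut}(T_2))=2$. Then \Cref{lem:subgroup} yields $\mathsf{utww}(\Gamma)\le 2$, which is finite. One point needs checking here: uniform twin-width is defined in \cite{bonnet2022twin3} for possibly non-finitely-generated groups such as $\operatorname{Aut}(T_2)$, and it is an isomorphism invariant. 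Both facts are built into its definition via permutation actions, so I will simply cite them.

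For the ``in particular'' part, fix any finite generating set $S$ of $\Gamma$. The remark following the definition of uniform twin-width says that finite uniform twin-width implies every Cayley graph has finite twin-width. So $\cay(\Gamma;S)$ has finite twin-width, and by \Cref{lem:strict-ordinary} it does not matter whether this is read as strict or ordinary twin-width, because $\cay(\Gamma;S)$ has degree at most $2|S|$. On the other side, accessibility of a finitely generated group is, by Thomassen--Woess \cite{ThomassenWoess1993}, equivalent to accessibility of one, hence every, Cayley graph. Since $\Gamma$ is inaccessible, $\cay(\Gamma;S)$ is inaccessible for every finite $S$.

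None of these steps is hard. The only real subtleties are two bookkeeping points: confirming that $\Gamma$ is infinite, so that \Cref{chain} and \Cref{lem:residually-p-tree} apply, and matching the notion of ``accessible'' in Wilkes's theorem with the graph-theoretic one. For the second point I will invoke the Thomassen--Woess equivalence as stated in the preliminaries.
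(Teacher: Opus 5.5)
Your proposal is correct and follows the paper's proof exactly. It uses Wilkes's group with $p=2$, then the faithful action on $T_2$ from \Cref{lem:residually-p-tree}, then \Cref{lem:aut-binary-tree} with \Cref{lem:subgroup} to get $\mathsf{utww}(\Gamma)\le 2$, and reads the ``in particular'' part off the remark on uniform twin-width and the Thomassen--Woess equivalence. Your argument that $\Gamma$ is infinite (finite groups are trivially accessible) is in fact sounder than the paper's, which asserts that $\Gamma$ is countably infinite merely because it is finitely generated.
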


\begin{proof}
Apply \Cref{lem:wilkes} with $p=2$. That is, there exists a finitely generated
group $\Gamma$ that is inaccessible and residually $2$-finite. Since $\Gamma$ is finitely generated, it is countably infinite. 
By \Cref{lem:residually-p-tree}, $\Gamma$ acts faithfully on the rooted binary tree
$T_2$, hence
$\Gamma\leq \operatorname{Aut}(T_2)$. By \Cref{lem:aut-binary-tree} and \Cref{lem:subgroup}, $utww(\Gamma)\leq 2$.  


\end{proof}

\bibliographystyle{plainurlnat}
\bibliography{ch.bib}

\end{document}